\newtheorem{theorem}{Theorem}
\newtheorem{lemma}[theorem]{Lemma}
\newtheorem{cor}[theorem]{Corollary}
\newtheorem{conj}[theorem]{Conjecture}
\theoremstyle{definition}
\newtheorem{definition}{Definition}
\newcommand{\Z}{\mathbb{Z}}
\title{Sequences of Consecutive Happy Numbers in\\ Negative Bases}
 \author{Helen G. Grundman}
  \address{Department of Mathematics, Bryn Mawr College}
 \email{grundman@brynmawr.edu}
 \author{Pamela E. Harris}
 \address{Department of Mathematics and Statistics, Williams College} 
 \email{pamela.e.harris@williams.edu}
\thanks{This work was supported in part by the National Science Foundation grant DMS-1545136.}
\begin{document}

\begin{abstract}
For $b\leq -2$ and $e \geq 2$, let $S_{e,b}:\Z\to\Z_{\geq 0}$ be the function taking an integer to the sum of the $e$-powers of the digits of its base $b$ expansion.  An integer $a$ is a {\em $b$-happy number} if there exists $k\in\mathbb{Z}^+$ such that $S_{2,b}^k(a) = 1$.
We prove that an integer is $-2$-happy if and only if it is congruent to 1 modulo 3 and that it is $-3$-happy if and only if it is odd.  Defining a $d$-sequence to be an arithmetic sequence with constant difference $d$ and setting $d = \gcd(2,b - 1)$, we prove that if $b \leq -3$ odd or $b \in \{-4,-6,-8,-10\}$, there exist arbitrarily long finite sequences of $d$-consecutive $b$-happy numbers.
\end{abstract}

\maketitle

\section{Introduction}
As is standard, a positive integer $a$ can be uniquely expanded in the base $b\geq 2$ as $a=\sum_{i=0}^na_ib^i$, where $0\leq a_i\leq b-1$ and $a_n\neq 0$. This definition can be extended to negative bases $b\leq -2$ in an analogous manner, with $0 \leq a_i \leq |b| - 1$.  The study of negative bases was introduced in the 1885 work of Vittorio Gr\"{u}nwald \cite{VG}.  As with positive bases, positive integers have unique representation in negative bases, but the same holds for negative numbers in negative bases, with no need for a negative sign: Any number written in a negative base with an odd number of digits is necessarily positive, while any written with an even number of digits is necessarily negative.
For example, converting between base 10 and base $-10$, we have $2017=(18197)_{(-10)}$ and $-2017=(2023)_{(-10)}$.

We begin by adapting the definition of generalized happy numbers and the corresponding function given in~\cite{GT01} to include the case of negative bases.  It is natural, in this case, to extend the domain of the function to include all integers.  

\begin{definition}
Let $b \leq -2$ and $e\geq 2$ be integers, and let $a\in \Z-\{0\}$ be given by $a=\sum_{i=0}^na_ib^i$ where $0\leq a_i\leq |b|-1$, for each $0\leq i\leq n$. Define the function $S_{e,b}:\Z\to\Z_{\geq 0}$ by $S_{e,b}(0) = 0$ and, for $a \neq 0$, 
\[S_{e,b}(a)=\sum_{i=0}^{n}a_i^e.\]
\end{definition}

Note that if $b \leq -2$ and $k > 0$, for each $a\neq 0$, $S_{e,b}^k(a) > 0$.  
\begin{definition}
An integer $a$ is an \emph{$e$-power $b$-happy number} if, for some $k \in \Z^+$, $S_{e,b}^k(a) = 1$.  A \emph{$b$-happy number} is a $2$-power $b$-happy number.
\end{definition}

The following definition and the others in the sequel are either directly from or adapted from~\cite{GT07}.
\begin{definition}
A {\em $d$-consecutive} sequence is defined to be an arithmetic sequence with
constant difference $d$.
\end{definition}

In Section~\ref{cyclesection}, we determine the fixed points and cycles of the functions $S_{2,b}$ for $-10 \leq b \leq -2$.  In Section~\ref{sequencesection}, we generalize the work of El-Sedy and Siksek~\cite{siksek}
and the work of Grundman and Teeple~\cite{GT07} 
on sequences of consecutive $b$-happy numbers. In particular, Grundman and Teeple  showed
that there exist arbitrarily long finite $d$-consecutive sequences of $b$-happy numbers, where $b\geq 2$ and $d=\gcd(2,b-1)$~\cite[Corollary 2]{GT07}. We prove that this result does not hold for $b = -2$, but does hold for all odd negative bases and for even negative bases $-10 \leq b \leq -4$.  

\section{Cycles and Fixed Points}
\label{cyclesection}

In this section, we first determine a bound,
dependent on the given base $b \leq -2$, 
such that each fixed
point and at least one point in every cycle is less than
this bound.  We then use this result to compute
all cycles and fixed points of $S_{2,b}$ for 
$-10 \leq b \leq -2$.

\begin{theorem}\label{thm:GetSmaller}
Let $b\leq -2$. If $a > (|b|-1)(|b|^2 - |b| + 1)$, 
then $0 < S_{2,b}(a) < a$.
\end{theorem}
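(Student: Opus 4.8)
The plan is to expand $a$ in base $b$ as $a=\sum_{i=0}^{n}a_ib^i$ with $0\le a_i\le |b|-1$ and $a_n\neq 0$, write $B=|b|\ge 2$, and estimate $S_{2,b}(a)=\sum_{i=0}^n a_i^2$ against $a$. The lower bound $S_{2,b}(a)>0$ is immediate, since $a\neq 0$ forces some digit to be nonzero, hence $S_{2,b}(a)\ge 1$ (this is also the note preceding the theorem). For the upper bound $S_{2,b}(a)<a$ I would first pin down the parity and size of $n$: because $a$ is positive, $n$ must be even (an expansion with an even number of digits represents a negative integer), and because $a>(B-1)(B^2-B+1)\ge B-1$, the expansion has more than one digit, so $n\ge 2$.

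Next I would extract a lower bound for $a$ in terms of $n$. For even $n$, separating the nonnegative even-indexed terms from the subtracted odd-indexed ones and keeping only the leading term on the positive side gives
\[
a \ge a_nB^n - (B-1)\sum_{j=0}^{n/2-1}B^{2j+1} \ge B^n - \frac{B(B^n-1)}{B+1} = \frac{B^n+B}{B+1}.
\]
Combined with the trivial bound $S_{2,b}(a)\le (n+1)(B-1)^2$, it then remains to treat two cases. For $n=2$: $S_{2,b}(a)\le 3(B-1)^2$, and $3(B-1)^2\le (B-1)(B^2-B+1)$ since this reduces to $0\le (B-2)^2$; hence $S_{2,b}(a)\le (B-1)(B^2-B+1)<a$ by hypothesis. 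For even $n\ge 4$ it suffices to show $(n+1)(B-1)^2<\tfrac{B^n+B}{B+1}$, i.e.\ that $g(n):=B^n+B-(n+1)(B-1)^2(B+1)$ is positive.

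I would finish the case $n\ge 4$ by induction in steps of $2$. The base case $g(4)=B^4-5B^3+5B^2+6B-5>0$ is checked directly — immediate for $B\ge 5$ from $B^3(B-5)\ge 0$, and by hand for $B=2,3,4$ — and the inductive step follows from $g(n+2)-g(n)=(B^2-1)(B^n-2B+2)>0$ for $n\ge 2$. The main obstacle is the negative-base lower bound for $a$: the alternating signs of the powers of $b$ mean the smallest positive integer with a given number of digits is not simply a power of $B$, so one must be careful to get a bound sharp enough for the final polynomial inequality to go through. Indeed it is tight (the margin in $g(4)$ is only $3$ when $B=2$), which is precisely why isolating the case $n=2$ and retaining the ``$+B$'' in the numerator of the lower bound for $a$ is worth the effort.
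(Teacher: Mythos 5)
Your proof is correct, and it is organized differently from the paper's. The paper estimates the difference $a-S_{2,b}(a)=\sum_i a_i(b^i-a_i)$ termwise, minimizing each summand; this yields a closed-form inequality with no induction, but at the cost of a finer case analysis: for three digits it must use the hypothesis to pin down the digits ($a_2=|b|-1$, $a_1\le |b|-2$) and reduce to $|b|^3-5|b|^2+11|b|-7>0$, and for $n=4$ it treats $b=-2$ separately from $b<-2$. You instead decouple the two quantities, playing the sharp lower bound $a\ge (B^n+B)/(B+1)$ (exactly the smallest positive integer with $n+1$ digits in base $-B$, so nothing is lost there) against the crude upper bound $S_{2,b}(a)\le (n+1)(B-1)^2$, and close with an induction in steps of two on $g(n)=B^n+B-(n+1)(B-1)^2(B+1)$; I verified the lower bound for $a$, the identity $g(n+2)-g(n)=(B^2-1)(B^n-2B+2)$, and the base-case values $g(4)=3,4,35$ for $B=2,3,4$. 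Your three-digit case is especially clean: comparing $3(B-1)^2$ with the threshold $(B-1)(B^2-B+1)$ via $(B-2)^2\ge 0$ avoids determining the digits of $a$ at all, and the equality at $B=2$ matches the paper's remark that the bound is best possible (since $3$ is a fixed point of $S_{2,-2}$). In short, the paper's approach buys a direct, induction-free estimate; yours buys a simpler treatment of the small-$n$ cases and uniform handling of $b=-2$ through explicit small-$B$ checks, with the tightness of the bound surfacing only in the margin of $g(4)$ at $B=2$.
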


\begin{proof}
Let $a$ and $b$ be as in the hypothesis.
Then $a = \sum_{i=0}^na_ib^i$ with $n$ even, $0\leq a_i\leq |b| - 1$, $a_n\neq 0$.
Observe that 
\begin{align}
a - S_{2,b}(a) &=\sum_{i=0}^na_ib^i-\sum_{i=0}^na_i^2
=\sum_{i=0}^{n}a_i(b^i-a_i)\nonumber \\
&=\sum_{j=1}^{\frac{n}{2}}a_{2j}(|b|^{2j}-a_{2j})-\sum_{j=1}^{\frac{n}{2}}a_{2j-1}\left(|b|^{2j-1} + a_{2j-1}\right)
+ a_0(|b|^0 - a_0).
\label{boundeq1}
\end{align}

{\bf Case: $\mathbf{n \geq 4}$.}  
Since, $a_n \geq 1$ and, for each $i$, $0\leq a_i\leq |b| - 1$, minimizing each term in~(\ref{boundeq1}) yields
\[a - S_{2,b}(a) \geq 1(b^n - 1) - \sum_{j=1}^{\frac{n}{2}}(|b| - 1)\left(|b|^{2j-1} + (|b| - 1)\right) + (|b| - 1)(1 - (|b| - 1)).\]
Noting that 
\[\sum_{j=1}^{\frac{n}{2}} |b|^{2j-1} = \frac{|b|}{|b|^2 - 1} \left(|b|^n - 1\right), \]
we have
\begin{align}
a - S_{2,b}(a) &\geq (b^n - 1) - (|b| - 1)\left(\frac{|b|}{|b|^2 - 1} \left(|b|^n - 1\right)+ \frac{n}{2}(|b| - 1)\right) - (|b| - 1)(|b| - 2)  \nonumber \\
&= \frac{b^n - 1}{|b| + 1} - \frac{n}{2}(|b| - 1)^2 - (|b| - 1)(|b| - 2)  \nonumber \\
&= \frac{1}{|b| + 1} \left(|b|^n - \frac{n}{2}(|b|^2 - 1)(|b| - 1) - (|b|^2 - 1)(|b| - 2) - 1\right)
\label{b=2} \\
&> \frac{1}{|b| + 1} \left(|b|^n - \frac{n}{2}|b|^3 - |b|^3  \right) \label{b>2} \\
&>  \frac{1}{|b| + 1} \left(|b|^{n-3} - \frac{n}{2} - 1 \right). \label{finalineq}
\end{align}

Note that the function $f(x) = 2^{x-3} - x/2 - 1$ is an increasing function for $x \geq 5$ and that $f(5) > 0$.  Thus, for $n \geq 5$, since $b \leq -2$, 
\[|b|^{n-3} - n/2 - 1 \geq 2^{n-3} - n/2 - 1 > 0,\]
and so, by inequality~(\ref{finalineq}), $a - S_{2,b}(a) > 0$.
Now, for $n = 4$ and $b < -2$, using inequality~(\ref{b>2}),
$a - S_{2,b}(a) > \frac{1}{|b| + 1} \left(|b|^4 - 3|b|^3  \right) \geq 0$, and for
$n = 4$ and $b = -2$, using inequality~(\ref{b=2}), $a - S_{2,b}(a) > 0$.

{\bf Case: $\mathbf{n < 4}$.}  
In this case, $(|b| - 1)(|b|^2 - |b| + 1) < a \leq (|b|-1)(|b|^2+1)$.  So $a = a_2b^2 + a_1 b + a_0$ with $a_2 = |b| - 1$, $0 \leq a_1 \leq |b| - 2$, and $0 \leq a_0 \leq |b| - 1$.
Thus, 
\begin{align*}
a - S_{2,b}(a) &=
a_{2}(|b|^{2}-a_{2}) - a_{1}\left(|b| + a_{1}\right)
+ a_0(1 - a_0) \\
&\geq (|b| - 1)(|b|^2 - (|b| - 1)) - (|b| - 2)(|b| + (|b| - 2)) + (|b| - 1)(1 - (|b| - 1)) \\
& = |b|^3 - 5|b|^2 + 11|b| - 7 > 0,
\end{align*}
since $b \leq -2$.
\end{proof}

Note that when $b = -2$, the bound in Theorem~\ref{thm:GetSmaller} is 4.  Since 3 is a fixed point of $S_{2,-2}$, the given bound is best possible. 

The following corollary is immediate.

\begin{cor} \label{bound}
Let $b \leq -2$.  Every fixed point of $S_{2,b}$ is
less than or equal to $(|b| - 1)(|b|^2 - |b| + 1)$ and every cycle of $S_{2,b}$ contains a number that is less than or equal to $(|b| - 1)(|b|^2 - |b| + 1)$.
\end{cor}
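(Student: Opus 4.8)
The plan is to deduce Corollary~\ref{bound} directly from Theorem~\ref{thm:GetSmaller}, handling fixed points and cycles as two separate (and short) arguments. Throughout, write $B = (|b|-1)(|b|^2 - |b| + 1)$ for the claimed bound.

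First I would dispose of the fixed points. Since $S_{2,b}$ takes values in $\Z_{\geq 0}$, any fixed point $a$ must satisfy $a \geq 0$; moreover $S_{2,b}(0) = 0$, so $0$ is a fixed point, and it is trivially at most $B$. For a fixed point $a \geq 1$, suppose toward a contradiction that $a > B$. Then Theorem~\ref{thm:GetSmaller} gives $0 < S_{2,b}(a) < a$, contradicting $S_{2,b}(a) = a$. Hence every fixed point of $S_{2,b}$ is at most $B$.

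Next I would treat the cycles. Let $\{a_1, a_2, \ldots, a_m\}$ be a cycle of $S_{2,b}$, so that $S_{2,b}(a_i) = a_{i+1}$ for $1 \leq i < m$ and $S_{2,b}(a_m) = a_1$. Each $a_i$ lies in the image of $S_{2,b}$, hence is nonnegative, and since a nontrivial cycle cannot contain $0$ (as $S_{2,b}(0)=0$ while $S_{2,b}^k(a)>0$ for $a\neq 0$), every $a_i$ is positive. Suppose toward a contradiction that $a_i > B$ for all $i$. Then Theorem~\ref{thm:GetSmaller} applies to each $a_i$, giving $a_{i+1} = S_{2,b}(a_i) < a_i$ for every index read cyclically; that is, $a_1 > a_2 > \cdots > a_m > a_1$, which is impossible. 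Therefore some element of the cycle is at most $B$.

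Since both parts reduce immediately to one application of Theorem~\ref{thm:GetSmaller} together with, in the cycle case, the elementary observation that a finite cyclically strictly decreasing sequence cannot exist, there is essentially no obstacle. The only points needing care are the degenerate cases — the fixed point $a = 0$ and the trivial cycle $\{0\}$ — which are accounted for directly by the convention $S_{2,b}(0) = 0$ in the definition.
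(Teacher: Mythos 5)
Your proposal is correct and is exactly the argument the paper has in mind: the paper simply declares the corollary ``immediate'' from Theorem~\ref{thm:GetSmaller}, and your two short contradiction arguments (a fixed point above the bound would strictly decrease; a cycle entirely above the bound would be cyclically strictly decreasing) are the standard way to spell that out.
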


Using Corollary~\ref{bound} and a direct computer search, we determine all fixed points and cycles in the bases $-10 \leq b \leq -2$.  The results are given in Table \ref{tab:1}.

\begin{table}[ht]
\centering
\begin{tabular}{|c|p{1cm}|p{10.5cm}|p{1.5cm}|p{1.5cm}|}\hline
Base& Fixed Points & Cycles& Smallest happy number $> 1$& Largest happy number $< -1$\\\hline\hline
$-2$&1,2,3& None&4&-2\\\hline
$-3$&1&(2,4,6)&3&-1\\\hline
$-4$&1&(6,14)&16&-4\\\hline
$-5$&1,10,11&(2,4,16,6,18,14,26), (9,33,29,17)&25&-5\\\hline
$-6$&1&(2,4,16,33,11,51,29,30)&36&-6\\\hline
$-7$&1, 41&(2,4,16,30,14,26,42), (5,25,33,35), (6,36)&49&-7\\\hline
$-8$&1, 46&(11,59), (30,62,38,53)&64&-8\\\hline
$-9$&1&(6,36,26,114,76,18,50,42,62,74), (9,65), (27,37)&5&-5\\\hline
$-10$&1&(19,163,29,146,76,46,73), (35,75)&100&-10\\\hline
\end{tabular}
\vspace{1pt}
\caption{Base 10 representation of fixed points and cycles of $S_{2,b}$ for $-10 \leq b \leq -2$.}
\label{tab:1}
\end{table}
\begin{definition}
\label{Udef}
For $e \geq 2$ and $b \leq -2$, let
\[U_{e,b} = \{a \in \Z^+ | \mbox{ for some } m\in \Z^+,\ S_{e,b}^m(a) = a\}.\]
\end{definition}

The following straightforward lemmas are used throughout this work.   
\begin{lemma}\label{Ulemma}
Fix $b \leq -2$.  For each $a\neq 0$, there exists some $k \in \Z^+$ such that 
$S_{2,b}^k(a) \in U_{2,b}$.
\end{lemma}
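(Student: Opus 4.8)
The plan is to combine Theorem~\ref{thm:GetSmaller} with a finite-check argument below the bound $B := (|b|-1)(|b|^2-|b|+1)$. Fix $b \le -2$ and let $a \neq 0$. The first observation is that, since $b \le -2$ and $k > 0$, the excerpt already notes $S_{2,b}^k(a) > 0$ for all $k \ge 1$; so after one application we may assume we are dealing with a positive integer, and in fact may replace $a$ by $S_{2,b}(a) > 0$ and thereby assume $a \in \Z^+$. Now consider the forward orbit $a, S_{2,b}(a), S_{2,b}^2(a), \dots$, all of which are positive integers. By Theorem~\ref{thm:GetSmaller}, whenever a term of this orbit exceeds $B$, the next term is strictly smaller (and still positive). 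Hence the orbit is strictly decreasing as long as it stays above $B$; since it consists of positive integers, it cannot decrease forever, so there is some $k_0 \in \Z^+$ with $S_{2,b}^{k_0}(a) \le B$.

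From that point on, the orbit is confined to the finite set $\{1, 2, \dots, B\}$: indeed, if $1 \le c \le B$ then $S_{2,b}(c) \le (|b|-1)^2(\lfloor \log_{|b|} c \rfloor + 1)$, which one checks is again $\le B$ — or, more cleanly, one simply notes that the orbit values are all positive and that once below $B$ a single further step either stays $\le B$ or jumps above $B$ and then, by Theorem~\ref{thm:GetSmaller}, comes back down below $B$ within finitely many steps; in either case the orbit visits $\{1,\dots,B\}$ infinitely often. Since $\{1, \dots, B\}$ is finite, by the pigeonhole principle two of these visits coincide: there exist $i < j$ with $S_{2,b}^i(a) = S_{2,b}^j(a) =: c$. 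Setting $m = j - i \in \Z^+$, we get $S_{2,b}^m(c) = c$, so $c \in U_{2,b}$, and $c = S_{2,b}^i(a)$ with $i \ge 1$ (replacing $i$ by $i+m$ if necessary to ensure $i \ge 1$). Taking $k = i$ completes the proof.

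The only step requiring any care is verifying that the orbit genuinely returns to (and stays eventually within reach of) the bounded region rather than oscillating above it indefinitely; but this is controlled entirely by Theorem~\ref{thm:GetSmaller}, which guarantees strict decrease above $B$, so any excursion above $B$ is finite. I expect the write-up to be short: the substantive content is Theorem~\ref{thm:GetSmaller}, and the lemma is essentially the standard "eventually periodic" argument (positive-integer-valued, eventually bounded orbit $\Rightarrow$ eventually enters a cycle) applied to $S_{2,b}$.
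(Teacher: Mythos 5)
Your proposal is correct and matches the paper's intent: the paper states this lemma without proof (labeling it ``straightforward''), and the intended argument is exactly yours---Theorem~\ref{thm:GetSmaller} guarantees every excursion of the orbit above $B=(|b|-1)(|b|^2-|b|+1)$ is a strictly decreasing finite run, so the orbit of positive integers visits $\{1,\dots,B\}$ infinitely often and the pigeonhole principle yields a repeated value, i.e.\ a point of $U_{2,b}$. The only caveat is your parenthetical bound $S_{2,b}(c)\le(|b|-1)^2\left(\lfloor\log_{|b|}c\rfloor+1\right)$, which is unreliable in a negative base since a positive integer can require more than $\lfloor\log_{|b|}c\rfloor+1$ digits (e.g.\ $2017=(18197)_{(-10)}$), but the alternative you actually rely on---finiteness of each excursion above $B$---is sound, so the proof stands.
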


\begin{lemma}\label{paritylemma}
Fix $b \leq -2$, $a\in \Z$, and $k\in \Z^+$. 
If $b$ is odd, then 
\[S_{2,b}^k(a) \equiv a \pmod 2.\]
\end{lemma}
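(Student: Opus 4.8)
The plan is to reduce to the case $k=1$ and then induct on $k$. The heart of the matter is an observation about odd bases: if $b$ is odd, then every power $b^i$ with $i\geq 0$ is odd, so for a nonzero integer $a=\sum_{i=0}^n a_ib^i$ (the negative-base expansion) one has $a\equiv\sum_{i=0}^n a_i\pmod 2$, since each $b^i\equiv 1\pmod 2$. Combining this with the elementary congruence $m^2\equiv m\pmod 2$, valid for every integer $m$ because $m^2-m=m(m-1)$ is even, one gets
\[
S_{2,b}(a)=\sum_{i=0}^n a_i^2\equiv\sum_{i=0}^n a_i\equiv a\pmod 2.
\]
Since also $S_{2,b}(0)=0$, this shows $S_{2,b}(a)\equiv a\pmod 2$ for every $a\in\Z$.

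For general $k$, I would induct on $k$, the base case $k=1$ being the displayed computation. For the inductive step, assuming $S_{2,b}^k(a)\equiv a\pmod 2$, apply the $k=1$ statement to the integer $S_{2,b}^k(a)$ to conclude $S_{2,b}^{k+1}(a)=S_{2,b}\bigl(S_{2,b}^k(a)\bigr)\equiv S_{2,b}^k(a)\equiv a\pmod 2$.

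I do not anticipate a genuine obstacle here; the only point requiring a moment's care is that the digit identity $a\equiv\sum_i a_i\pmod 2$ be justified for negative $a$ as well, but this is automatic: $a=\sum_{i=0}^n a_ib^i$ is a true identity of integers for every nonzero $a\in\Z$ in the negative-base representation, and reducing it modulo $2$ via $b^i\equiv 1\pmod 2$ is valid regardless of the sign of $a$ or the parity of the number of digits.
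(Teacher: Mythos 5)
Your proof is correct and follows essentially the same route as the paper: the chain of congruences $S_{2,b}(a)=\sum a_i^2\equiv\sum a_i\equiv\sum a_ib^i=a\pmod 2$ (using $b^i\equiv 1$ and $a_i^2\equiv a_i$ mod $2$), the trivial check at $a=0$, and a simple induction on $k$. Nothing further is needed.
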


\begin{proof}
Fix $a$, $b$, and $k$ as in the lemma.  Noting that the result is trivial if $a = 0$, let $a = \sum_{i=0}^{n}a_ib^i$.  Then
\[a = \sum_{i=0}^{n}a_ib^i \equiv \sum_{i=0}^{n}a_i \equiv \sum_{i=0}^{n}a_i^2 \equiv S_{2,b}(a) \pmod 2. \]  A simple induction argument completes the proof.
\end{proof}

\section{Consecutive $\mathbf b$-Happy Numbers}
\label{sequencesection}

In this section, we consider sequences of 
consecutive $b$-happy numbers for negative $b$.  
Grundman and Teeple~\cite{GT07} proved, for each base $b \geq 2$, that, letting $d = \gcd(2,b-1)$, there
exist arbitrarily long finite sequences of $d$-consecutive $b$-happy numbers.    We prove the following theorem using ideas from both of~\cite{siksek,GT07}.  Note that part (1) of the theorem demonstrates that the results in~\cite{GT07} do not generalize directly to negative bases.

\begin{theorem}
\label{seqthm}
Let $b \leq -2$.
\begin{enumerate}
\item
There is an infinitely long sequence of $3$-consecutive $-2$-happy numbers.  In particular, $a\in \Z^+$ is $-2$-happy if and only if $a \equiv 1 \pmod 3$.
\item
There is an infinitely long sequence of $2$-consecutive $-3$-happy numbers.  In particular, $a\in \Z^+$ is $-3$-happy if and only if $a \equiv 1 \pmod 2$.
\item
For $b\in \{-4,-6,-8,-10\}$, there exist arbitrarily long finite sequences of consecutive $b$-happy numbers.
\item
For $b$ odd, there exist arbitrarily long finite sequences of $2$-consecutive $b$-happy numbers.
\end{enumerate}
\end{theorem}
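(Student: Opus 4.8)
The plan is to prove parts (1) and (2) by a congruence invariant combined with the classification in Table~\ref{tab:1}, and parts (3) and (4) by a digit–padding construction in the spirit of~\cite{siksek,GT07}. For (1): since $-2\equiv 1\pmod 3$ and the only base $-2$ digits are $0$ and $1$, every digit satisfies $a_i^2=a_i$, so $S_{2,-2}(a)=\sum_i a_i^2=\sum_i a_i\equiv\sum_i a_i(-2)^i=a\pmod 3$; thus $S_{2,-2}$ preserves residues modulo $3$. By Lemma~\ref{Ulemma} the forward orbit of any $a\neq 0$ meets $U_{2,-2}$, which by Table~\ref{tab:1} equals $\{1,2,3\}$, and of these only $1$ is $\equiv 1\pmod 3$; hence a positive integer is $-2$-happy iff it is $\equiv 1\pmod 3$, giving the infinite $3$-consecutive sequence $1,4,7,\dots$. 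Part (2) is the same argument with the modulus $3$ replaced by $2$: since $-3$ is odd, Lemma~\ref{paritylemma} gives $S_{2,-3}^k(a)\equiv a\pmod 2$; by Table~\ref{tab:1}, $U_{2,-3}=\{1\}\cup\{2,4,6\}$ with the cycle consisting entirely of even integers, so an odd positive integer is forced onto $1$ while an even one can never reach $1$. Thus $a\in\Z^+$ is $-3$-happy iff $a$ is odd, and $1,3,5,\dots$ is the desired infinite $2$-consecutive sequence.

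For (3) and (4), write $d=\gcd(2,b-1)$, so $d=1$ for $b\in\{-4,-6,-8,-10\}$ and $d=2$ for $b\le -3$ odd. I would start from two elementary facts. First, $S_{2,b}$ maps $\Z^+$ onto $\Z^+$: the integer whose base $b$ digits are $0$ except for $t$ digits equal to $1$ (in any positions) has $S_{2,b}$-value $t$. Moreover, placing such $1$'s and $0$'s in the high-order positions above an arbitrary block of $r$ low-order digits yields, for any prescribed $C\in\Z^+$, an integer $H$ with $S_{2,b}(H)=C$ for which the base $b$ representation of $H\cdot b^r+x$ is literally the concatenation of those of $H$ and $x$ whenever $0\le x<|b|^r$; hence $S_{2,b}(H\cdot b^r+x)=C+\sigma(x)$, where $\sigma(x)$ is the sum of the squares of the base $b$ digits of $x$. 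Given a target length $L$, I would then seek $n_i=H\cdot b^r+x_i$ with $x_i=x_0+di$ and $x_{L-1}<|b|^r$, so that $n_0,\dots,n_{L-1}$ is a $d$-consecutive sequence with $S_{2,b}(n_i)=C+\sigma(x_0+di)$. The parity bookkeeping is automatic: when $b$ is odd, Lemma~\ref{paritylemma} forces every $S_{2,b}(n_i)$ and every $n_i$ to share the parity of $H+x_0$, which we set to be odd (necessary, as every odd-base happy number is odd), and when $b$ is even there is no constraint. It then suffices to choose $x_0$ and $C$ so that each of $C+\sigma(x_0),\,C+\sigma(x_0+d),\,\dots,\,C+\sigma(x_0+d(L-1))$ is $b$-happy: then $S_{2,b}(n_i)$ happy forces $n_i$ happy, and $n_0,\dots,n_{L-1}$ is the run we want. (The number of digits of $H$ can be adjusted to make each $n_i$ a positive integer if desired.)

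The main obstacle is precisely that last choice. Once $x_0$ and $L$ are fixed, $V=\{\sigma(x_0+di):0\le i\le L-1\}$ is a fixed finite set, and one must produce $x_0$ and an offset $C$ with $C+V$ lying entirely in the set of $b$-happy numbers — a statement that cannot be obtained by circular appeal to the theorem itself, since being odd (or, for even $b$, satisfying no congruence) is not sufficient for happiness in these bases. I would attack it using the explicit data of Table~\ref{tab:1}: in each of $b\in\{-4,-6,-8,-10\}$ (and, modulo the parity invariant, for each odd $b$) the non-happy numbers are exactly those whose orbit reaches a listed cycle or non-trivial fixed point, so it is enough to choose $x_0$ so that $V$ is small and controlled — exploiting that $0,1,4$ are squares of legal digits when $|b|\ge 3$ — and then choose $C$ large (and in a prescribed residue class when $b$ is odd) so that for every $v\in V$ the finitely many iterates of $C+v$ fall below the bound of Theorem~\ref{thm:GetSmaller} and terminate at $1$ rather than at any listed obstruction. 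Carrying out this simultaneous choice — whether by a direct construction base by base or by a bootstrap argument — is the technical heart of the proof; it is also exactly where $b=-2$ is excluded, consistently with the hard mod $3$ obstruction of part~(1), and why the even bases below $-10$ are not covered.
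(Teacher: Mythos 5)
Your treatment of parts (1) and (2) is correct and is essentially the paper's own argument (Lemma~\ref{-2-3}): the mod~$3$ invariance of $S_{2,-2}$ and the parity invariance of $S_{2,-3}$ (Lemma~\ref{paritylemma}), combined with Lemma~\ref{Ulemma} and the sets $U_{2,-2}=\{1,2,3\}$, $U_{2,-3}=\{1,2,4,6\}$ from Table~\ref{tab:1}.

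For parts (3) and (4), however, there is a genuine gap, and you name it yourself: after the digit-padding reduction (which is sound, and is in the same spirit as the paper's Lemma~\ref{Flemma}), everything hinges on the claim that for a fixed finite set $V$ of positive integers, all congruent modulo $d$, one can find a single shift $C$ so that every element of $C+V$ is $b$-happy. That claim is precisely the assertion that $V$ is $(2,b)$-good, i.e.\ the content of Theorem~\ref{mainthm}, and you do not prove it. The proposed remedy --- ``choose $C$ large \dots\ so that for every $v\in V$ the finitely many iterates of $C+v$ fall below the bound of Theorem~\ref{thm:GetSmaller} and terminate at $1$ rather than at any listed obstruction'' --- does not work as stated: \emph{every} positive integer's orbit eventually falls below that bound, and making $C$ large gives no control whatsoever over which fixed point or cycle each orbit lands in; moreover the various $C+v$ differ by fixed amounts, so their fates cannot be tuned independently. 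The paper closes exactly this gap with a collision-and-induction mechanism: Lemma~\ref{Flemma} shows goodness pulls back through $S_{2,b}$ and $I$ (using a padding block of $1$'s whose total digit length is made odd, the negative-base subtlety you only gesture at); Lemma~\ref{Flemmas} constructs, base by base for $b\in\{-4,-6,-8,-10\}$, explicit compositions $F$ of $S_{2,b}$ and $I$ that force two arbitrary orbits to collide, using the cycle data of Table~\ref{tab:1}; Lemmas~\ref{oddcongruencelemma} and~\ref{Ftheoremodd} do the same for odd $b\leq -5$ via a congruence argument modulo $b-1$; and Theorem~\ref{mainthm} then inducts on $|T|$. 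Without an argument of this kind (or some other proof of your simultaneous-shift claim), parts (3) and (4) remain unproved; your proposal is a correct reduction plus an acknowledged missing core, not a proof.
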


The smaller even negative bases not covered by Theorem~\ref{seqthm} are addressed in the following conjecture.

\begin{conj}
For $b\leq -12$ and even, there exist arbitrarily long finite sequences of consecutive $b$-happy numbers.
\end{conj}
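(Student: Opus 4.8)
The plan is to mirror the proof of Theorem~\ref{seqthm}(3) and attempt to push that construction through uniformly in the base. Since $b$ is even, $b-1$ is odd and $d=\gcd(2,b-1)=1$, so the goal is ordinary ($1$-consecutive) runs. The first step is a concatenation reduction. Fix an even $k$ large enough that every integer $j$ with $0\le j\le J:=(|b|-1)(|b|^{k}-1)/(|b|^{2}-1)$ has a base-$b$ expansion supported on positions $0,\dots,k-1$; this $J$ is the largest nonnegative integer so representable, and such representations are unique. For any positive integer $A$ and any such $j$, the expansion of $Ab^{k}+j$ is the concatenation of that of $A$ with that of $j$, so that
\[
S_{2,b}(Ab^{k}+j)=S_{2,b}(A)+S_{2,b}(j).
\]
Because $1=1^{2}$ is an admissible digit square, every $C\in\Z_{\ge 0}$ is realized as $C=S_{2,b}(A)$ for some positive $A$ (take $A$ with exactly $C$ nonzero digits, each equal to $1$, placed in even positions so that $A>0$). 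As $j$ runs over $0,1,\dots,N-1$ the integers $Ab^{k}+j$ are consecutive, and each is $b$-happy if and only if $C+S_{2,b}(j)$ is. Writing $\mathcal H_b$ for the set of $b$-happy numbers, it therefore suffices to produce a single $C\in\Z_{\ge 0}$ with
\[
C+T_N\subseteq\mathcal H_b,\qquad T_N:=\{\,S_{2,b}(j):0\le j\le N-1\,\},
\]
that is, to find a translate of the finite set $T_N$ lying entirely inside $\mathcal H_b$.

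Next I would study $T_N$ and $\mathcal H_b$ separately. The set $T_N$ consists of sums of at most $k\approx\log_{|b|}N$ digit squares, so $T_N\subseteq[0,R]$ with $R\le k(|b|-1)^{2}\approx(|b|-1)^{2}\log_{|b|}N$; crucially $T_N$ is \emph{not} all of $[0,R]$, since sums of few digit squares omit many values, and these gaps are exactly what keep the problem from being circular (covering all of $[0,R]$ would itself be a run of $R+1$ consecutive happy numbers). On the other side, membership in $\mathcal H_b$ is dynamical: by Theorem~\ref{thm:GetSmaller} and Corollary~\ref{bound} every orbit descends below $B=(|b|-1)(|b|^{2}-|b|+1)$, and by Lemma~\ref{Ulemma} each orbit meets $U_{2,b}$ (Definition~\ref{Udef}), so $a\in\mathcal H_b$ precisely when its orbit reaches the fixed point $1$ rather than one of the other cycles of $S_{2,b}$. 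I would then seek a seed, namely an explicit $C_0$ with $C_0+T_{N_0}\subseteq\mathcal H_b$ for a modest $N_0$, together with a bootstrap that re-applies the reduction using a known run as the target set: a run of length $L$ forces $T_N\subseteq[0,L-1]$, which holds as long as $\max_{j<N}S_{2,b}(j)\le L-1$, and hence upgrades to a run of length $N\approx|b|^{\,(L-1)/(|b|-1)^{2}}$. For $b\in\{-4,-6,-8,-10\}$ the cycle data of Table~\ref{tab:1} render both the seed search and the verification a finite computation, exactly as in Theorem~\ref{seqthm}(3).

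The main obstacle is that none of these closing steps is available uniformly in $b$. The crude interval bootstrap gains length only once $L\gtrsim(|b|-1)^{2}\log_{|b|}\!\big((|b|-1)^{2}\big)$, so the seed length it demands grows with $|b|$; the efficient alternative is to exploit the gaps in $T_N$ and require only $C+T_N\subseteq\mathcal H_b$, but this needs detailed knowledge of which values are happy, and Table~\ref{tab:1} already shows that the fixed points and cycles of $S_{2,b}$ vary erratically with $b$ with no apparent closed form as $|b|\to\infty$. Without them one cannot certify that a chosen translate $C+T_N$ avoids the non-$1$ cycles. Moreover, for even $b$ there is no modular invariant to fall back on: the argument of Lemma~\ref{paritylemma}, which for odd $b$ confines every happy number to a single parity class (simultaneously obstructing $1$-consecutive runs and organizing the $2$-consecutive construction of Theorem~\ref{seqthm}(4)), and the analogous invariant modulo $3$ governing the case $b=-2$, both collapse once $b$ is even with $|b|\ge 4$, because then $a_i^{2}\not\equiv a_i$ for the admissible digit $a_i=2$. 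Consequently the finite seed-and-verify computation that settles $-4,\dots,-10$ has no uniform analogue. Proving the conjecture in full would require either a uniform lower bound on the density and regularity of $\mathcal H_b$ strong enough to guarantee a translate of every $T_N$, or an explicit infinite family of seeds valid for all even $b\le -12$; producing either, uniformly in $b$, is the step I expect to be the crux that remains open.
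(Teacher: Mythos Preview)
The statement you are asked to prove is a \emph{conjecture} in the paper, not a theorem: the paper offers no proof, and explicitly lists it as open after proving Theorem~\ref{seqthm}. So there is no paper proof to compare against, and any honest attempt should end, as yours does, by identifying the missing ingredient rather than claiming a proof. In that sense your write-up is appropriate: you correctly conclude that the argument does not close uniformly in $b$ and name the obstruction (no control over the cycle structure of $S_{2,b}$ for general even $b\le -12$, hence no way to certify that a translate $C+T_N$ avoids the non-trivial cycles).

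Two remarks on the content. First, your opening sentence is misleading: what you sketch does \emph{not} mirror the paper's proof of Theorem~\ref{seqthm}(3). The paper's route for $b\in\{-4,-6,-8,-10\}$ is the ``$(e,b)$-good'' machinery of Definition~\ref{good} and Lemmas~\ref{Flemma}--\ref{Flemmas}: one shows, by explicit base-specific computations with the cycle data in Table~\ref{tab:1}, that any two positive integers can be merged by a suitable composition of $S_{2,b}$ and $I$, whence every finite set is $(2,b)$-good and Corollary~\ref{finalcor} applies. Your concatenation/translate reduction $S_{2,b}(Ab^{k}+j)=S_{2,b}(A)+S_{2,b}(j)$ is a genuinely different framework; it is a reasonable heuristic, but it is not what the paper does even in the cases it settles. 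Second, a small technical slip: you assert that every $C\in\Z_{\ge 0}$ is $S_{2,b}(A)$ for some \emph{positive} $A$, but $C=0$ forces $A=0$; restrict to $C\ge 1$ (harmless for the intended application). None of this changes the bottom line: the conjecture is open in the paper, and your proposal, while a sensible exploration, is not a proof and does not purport to be one.
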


We begin by proving the first two cases of Theorem~\ref{seqthm}.  The other two cases follow immediately from Corollary~\ref{finalcor}, stated and proved at the end of this section.

\begin{lemma}
\label{-2-3}
A positive integer $a$ is $-2$-happy if and only if $a \equiv 1 \pmod 3$ and is $-3$-happy if and only if $a$ is odd.
\end{lemma}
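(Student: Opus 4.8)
The plan is to prove both equivalences by combining a necessary congruence condition with a finite computation showing sufficiency on a suitable set of residue-class representatives, then bootstrapping via Theorem~\ref{thm:GetSmaller} and Lemma~\ref{Ulemma}.

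First I would establish the necessary conditions. For $b=-3$, Lemma~\ref{paritylemma} immediately gives $S_{2,-3}^k(a)\equiv a\pmod 2$, so if $a$ is $-3$-happy then $1\equiv a\pmod 2$, i.e.\ $a$ is odd. For $b=-2$, I need the analogous invariant modulo $3$: writing $a=\sum_{i=0}^n a_i(-2)^i$ with $a_i\in\{0,1\}$, note $(-2)^i\equiv(-2)^i\pmod 3$ and since $-2\equiv 1\pmod 3$ we get $a\equiv\sum a_i\equiv\sum a_i^2\equiv S_{2,-2}(a)\pmod 3$ (using $a_i\in\{0,1\}$ so $a_i=a_i^2$). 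A simple induction then shows $S_{2,-2}^k(a)\equiv a\pmod 3$ for all $k$, so $-2$-happy forces $a\equiv 1\pmod 3$.

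Next, for sufficiency, I would use Corollary~\ref{bound} together with Table~\ref{tab:1}: for $b=-2$ the only element of $U_{2,-2}$ is $\{1,2,3\}$ (fixed points, no cycles), and for $b=-3$ we have $U_{2,-3}=\{1\}\cup\{2,4,6\}$. By Lemma~\ref{Ulemma}, every $a\neq 0$ eventually maps into $U_{2,b}$, so $a$ is $b$-happy iff that terminal value is $1$. Combined with Theorem~\ref{thm:GetSmaller}, which guarantees $S_{2,b}$ strictly decreases above the explicit bound, it suffices to verify the claim for all integers $a$ up to $(|b|-1)(|b|^2-|b|+1)$ — that is, $a\le 3$ for $b=-2$ and $a\le 14$ for $b=-3$ — which is a direct finite check. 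One must also handle the fact that the domain is all of $\Z$ and iteration can pass through negative integers, but since $S_{2,b}^k(a)>0$ for $k\ge 1$ (noted after the first definition) and positive values are bounded-and-decreasing by Theorem~\ref{thm:GetSmaller}, the finite search over positive integers below the bound, plus a finite check of the relevant small negative inputs, closes the argument.

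The main obstacle I anticipate is not any single hard step but rather organizing the reduction cleanly: one has to argue that the mod-$3$ invariant for $b=-2$ (and mod-$2$ for $b=-3$) is genuinely preserved under $S_{2,b}$ — the key point being that digits in base $-2$ (resp.\ base $-3$, for the parity argument) are $0/1$-valued (resp.\ that $b$ is odd), so $a_i\equiv a_i^e$ in the relevant modulus — and then to confirm against Table~\ref{tab:1} that within each forbidden residue class there is indeed some cycle or fixed point other than $1$ that is reached, so the converse direction has no exceptional small cases slipping through. Once the invariant and the finite verification are in place, the two equivalences follow, and these in turn yield parts (1) and (2) of Theorem~\ref{seqthm} since the residue classes $\{a\equiv 1\pmod 3\}$ and $\{a\text{ odd}\}$ are exactly infinite $3$-consecutive and $2$-consecutive sequences, respectively.
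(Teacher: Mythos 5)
Your proposal is correct, and its core is the same as the paper's: both directions hinge on the congruence invariants $S_{2,-2}(a)\equiv a\pmod 3$ (since the base $-2$ digits are $0$ or $1$ and $-2\equiv 1\pmod 3$) and $S_{2,-3}(a)\equiv a\pmod 2$ (Lemma~\ref{paritylemma}), plus the computed fixed points and cycles. Where you differ is in closing the sufficiency direction: the paper simply applies Lemma~\ref{Ulemma} to land in $U_{2,-2}=\{1,2,3\}$ (resp.\ $U_{2,-3}=\{1,2,4,6\}$) and observes that $1$ is the \emph{only} element of that set in the admissible residue class, so the invariant forces the landing value to be $1$; you instead reduce, via Theorem~\ref{thm:GetSmaller}, to a finite verification of all positive integers in the admissible class up to the bound ($a\le 3$ for $b=-2$, $a\le 14$ for $b=-3$) followed by a descent/induction using the invariant. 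Your route is valid (the induction you gesture at with ``bootstrapping'' does work, since $S_{2,b}$ is positive, strictly decreasing above the bound, and class-preserving), but it is slightly heavier than needed, and your final remark about confirming that the \emph{forbidden} residue classes reach some cycle other than $1$ is superfluous: the invariant alone already rules out happiness there, and no such check appears (or is needed) in either argument.
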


\begin{proof}If $a=\sum_{i=0}^{n}a_i(-2)^i$ with $a_i\in\{0,1\}$ for all $0\leq i \leq n$, then
\[S_{2,-2}(a) = \sum_{i=0}^{n}a_i^2 = \sum_{i=0}^{n}a_i \equiv 
\sum_{i=0}^{n}a_i(-2)^i \equiv a \pmod 3.\]
Thus, if $a$ is $-2$-happy, $a \equiv 1 \pmod 3$.
Now, suppose that $a \equiv 1 \pmod 3$.  By Lemma~\ref{Ulemma}, there exists a $k\in \Z^+$ such that  $S_{2,-2}^k(a)\in U_{2,-2} = \{1,2,3\}$. Since $S_{2,-2}^k(a)\equiv a \equiv 1 \pmod 3$, $S_{2,-2}^k(a) = 1$ and so $a$ is a $-2$-happy number.  

By Lemma~\ref{paritylemma}, if $a$ is a $-3$-happy number, then $a$ is odd.  Since $U_{2,-3} = \{1,2,4,6\}$,  Lemmas~\ref{Ulemma} and~\ref{paritylemma} together imply that if $a \equiv 1 \pmod 2$, then $a$ is a $-3$-happy number.
\end{proof}

The following definitions are from~\cite{GT07}.

\begin{definition}
\label{good}
Let $e \geq 2$ and $b \leq -2$.
A finite set $T$ is {\em $(e,b)$-good}
if, for each $u \in U_{e,b}$, there exist
$n$, $k \in {\Z}^+$ such that for each $t \in T$, $S_{e,b}^k(t+n) = u$.
\end{definition}

\begin{definition}
Let $I: {\Z}^+ \rightarrow {\Z}^+$ be defined by $I(t) = t + 1$.
\end{definition}

We will prove that for each odd $b \leq -5$ and for $b\in \{-4,-6,-8,-10\}$,
a finite set $T$ of positive integers is $(2,b)$-good if and only if
all of the elements of $T$ are congruent modulo $d = \gcd(2,b-1)$. 
Lemma~\ref{Flemma} and its proof are analogous to~\cite[Lemma 4 and proof]{GT07}.

\begin{lemma}
\label{Flemma}
Fix $e \geq 2$ and $b \leq -2$.  Let $T \subseteq \Z^+$ be finite.  Let $F:{\Z}^+ \rightarrow {\Z}^+$ be the
composition of a finite sequence of the functions $S_{e,b}$ and $I$.
If $F(T)$ is $(e,b)$-good, then $T$ is $(e,b)$-good.
\end{lemma}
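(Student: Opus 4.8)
The plan is to mimic the structure of the analogous argument in Grundman--Teeple~\cite{GT07}. We are told $F$ is a composition $F = g_m \circ g_{m-1} \circ \cdots \circ g_1$ where each $g_j$ is either $S_{e,b}$ or $I$. It suffices to prove the claim when $F$ is a \emph{single} function, i.e.\ when $F = I$ or $F = S_{e,b}$, and then induct on the length of the composition: if $F'(T) = F(F''(T))$ is $(e,b)$-good where $F'' $ is one function and $F$ is the rest, then $F(F''(T))$ being good gives $F''(T)$ good (by the single-function case applied to $F$... wait, more carefully) — the clean way is: write $F = H \circ g$ with $g$ a single function and $H$ the remaining composition; if $F(T) = H(g(T))$ is $(e,b)$-good then by the inductive hypothesis applied to $H$ (a shorter composition) $g(T)$ is $(e,b)$-good, and then by the single-function base case $T$ is $(e,b)$-good.

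So the heart of the matter is the two base cases. \textbf{Case $F = I$.} Suppose $I(T) = \{t + 1 : t \in T\}$ is $(e,b)$-good. Fix $u \in U_{e,b}$; by definition there exist $n, k \in \Z^+$ with $S_{e,b}^k((t+1) + n) = u$ for every $t \in T$. Then $n' = n + 1$ and the same $k$ work for $T$ itself, since $(t+1)+n = t + n'$. Hence $T$ is $(e,b)$-good. \textbf{Case $F = S_{e,b}$.} Suppose $S_{e,b}(T) = \{S_{e,b}(t) : t \in T\}$ is $(e,b)$-good. This is the subtler case: given $u \in U_{e,b}$, we get $n, k$ with $S_{e,b}^k(S_{e,b}(t) + n) = u$ for all $t \in T$, but we need to produce $n', k'$ with $S_{e,b}^{k'}(t + n') = u$ directly, and $S_{e,b}(t) + n$ is not of the form $S_{e,b}(t + n)$ in general. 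The fix, exactly as in~\cite{GT07}, is to choose $n'$ of the form $n' = (\text{a string of zeros and then some high-order digits})$ — more precisely, pick $n'$ to be a multiple of a large power $b^N$ of $b$, with $N$ chosen large enough that for every $t \in T$ the base-$b$ digits of $t$ occupy positions below $N$; then the base-$b$ expansion of $t + n'$ is just the digits of $n'$ followed (in low-order positions) by the digits of $t$, so $S_{e,b}(t + n') = S_{e,b}(t) + S_{e,b}(n')$. Choosing $n'$ so that $S_{e,b}(n') = n$ (e.g.\ take $n'$ to be $n$ copies of the digit $1$ placed in sufficiently high and spread-out positions — one must be slightly careful in a negative base, but one can always realize any prescribed nonnegative integer as $S_{e,b}$ of some integer whose nonzero digits all lie above position $N$), we get $S_{e,b}(t + n') = S_{e,b}(t) + n$, and then $S_{e,b}^{k+1}(t + n') = S_{e,b}^k(S_{e,b}(t) + n) = u$, so $k' = k + 1$ works uniformly over $t \in T$. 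Hence $T$ is $(e,b)$-good.

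The main obstacle is the digit-concatenation argument in the $F = S_{e,b}$ case, specifically verifying that one can prescribe the ``tail'' value $S_{e,b}(n')$ to be an arbitrary positive integer $n$ using only high-order digits while preserving the property that appending the low-order digits of each $t \in T$ causes no carries or borrows in the negative-base expansion. Concatenation of base-$b$ representations behaves well precisely when the blocks do not interact, which for $b \le -2$ requires only that the lower block's digit positions are disjoint from and entirely below the upper block's — there is no sign subtlety because $0 \le a_i \le |b|-1$ forces the usual digit-by-digit addition with no carrying once the positions are separated. Realizing $S_{e,b}(n') = n$ is then easy: put $n$ ones in positions $N, N+2, N+4, \dots$ (even offsets from $N$, staying within a single "sign class" is unnecessary since we only care about the digit multiset), giving $S_{e,b}(n') = n \cdot 1^e = n$. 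Everything else is bookkeeping that parallels~\cite[Lemma~4]{GT07} line for line.
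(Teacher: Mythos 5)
Your overall strategy matches the paper's: induct down to the two single-function cases, dispose of $F=I$ trivially, and handle $F=S_{e,b}$ by adding a shift $n'$ whose nonzero digits all lie strictly above the digits of every $t\in T$, so that $S_{e,b}(t+n')=S_{e,b}(t)+S_{e,b}(n')$ and $S_{e,b}(n')$ can be prescribed to equal the given $n$. The paper does exactly this, taking $n'$ to be a block of $n$ consecutive digits $1$ followed by a padding block of zeros.

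There is, however, one genuine flaw, and it is precisely at the spot where the negative base demands care and where you explicitly wave the care away. Definition~\ref{good} requires the shift to be a \emph{positive} integer, and in base $b\leq -2$ the sign of a number is determined by the parity of the position of its leading nonzero digit: $\sum_i a_ib^i$ with leading digit at position $m$ has the sign of $b^m$, since $a_m|b|^m\geq |b|^m$ dominates the remaining digits, whose total absolute value is at most $|b|^m-1$. In your construction the ones sit at positions $N, N+2,\dots, N+2(n-1)$, all of the same parity as $N$; if $N$ is odd (and you never constrain its parity), then $n'=\sum_{j=0}^{n-1}b^{N+2j}<0$, which is not an admissible shift in Definition~\ref{good}. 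So the parenthetical claim that ``staying within a single sign class is unnecessary since we only care about the digit multiset'' is false: the digit multiset controls $S_{e,b}(n')$ but not the sign of $n'$, and the sign matters. The repair is a one-line addition — require $N$ even, so every $1$ sits at an even position and $n'>0$ — and this is exactly the issue the paper handles by padding with $\ell'\in\{\ell,\ell+1\}$ zeros chosen so that $n'+\ell'$ is odd, forcing the leading digit of its shift to an even position. With that correction your argument is complete and essentially identical to the paper's proof.
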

\begin{proof}
Fix $e \geq 2$, $b \leq -2$, and a finite set of positive integers, $T$.  Clearly,
if $I(T)$ is $(e,b)$-good, then $T$ is $(e,b)$-good.  Using a simple
induction argument, it suffices to show that if
$S_{e,b}(T)$ is $(e,b)$-good, then $T$ is $(e,b)$-good.

Let $S_{e,b}(T)$ be
$(e,b)$-good and $u \in U_{e,b}$.  Then, by the definition of $(e,b)$-good, there exist $n^\prime$
and $k^\prime$ such that for each $s \in S_{e,b}(T)$,
$S_{e,b}^{k^\prime}(s + n^\prime) = u$.  
Let $\ell$ be the number of base $b$ digits of the largest element of $T$ and let
$\ell^\prime = \ell$ or $\ell + 1$ such that $n^\prime + \ell^\prime$ is odd.
Let \[n = \sum_{i = \ell^\prime}^{n^\prime + \ell^\prime -1} b^i =
\underbrace{11\ldots11}_{n^\prime}\underbrace{00\ldots00}_{\ell^\prime} \in \Z^+.\]
 Then $S_{e,b}(n) = n^\prime$ and
for each $t \in T$, $S_{e,b}(t+n) = S_{e,b}(t) + n^\prime$.
Let $k = k^\prime +1$.
Then for each $t\in T$,
\[S_{e,b}^{k}(t+n) = S_{e,b}^{k^\prime}(S_{e,b}(t+n)) =
S_{e,b}^{k^\prime}(S_{e,b}(t) + n^\prime) = u.\]
So $T$ is $(e,b)$-good. 
\end{proof}

\begin{lemma}\label{Flemmas}
Let $b\in \{-4,-6,-8,-10\}$   
and let $0 < t_2 < t_1$ be integers. Then there exists 
a function $F$ of the type described in
Lemma~\ref{Flemma} such that $F(t_1) = F(t_2)$.
\end{lemma}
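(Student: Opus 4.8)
\textbf{Proof proposal for Lemma~\ref{Flemmas}.}

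The plan is to show that for these four bases, any two distinct positive integers can be driven to a common value by a finite composition of $S_{2,b}$ and $I$. The first reduction is to handle the increment map $I$ separately: given $0 < t_2 < t_1$, applying $I^{t_1 - t_2}$ to $t_2$ already produces $t_1$, but that only handles one of the two arguments. So instead I would aim to collapse the \emph{gap} between $t_1$ and $t_2$ while also shrinking both numbers, using $S_{2,b}$ to make progress downward and $I$ to realign. The cleanest route is an induction on $t_1$ (or on $t_1 - t_2$, or on the maximum of the two): it suffices to exhibit a single function $F$ of the allowed type with $0 < F(t_2) < F(t_1) < t_1$, or with $F(t_1) = F(t_2)$ outright, and then invoke the inductive hypothesis on the smaller pair.

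The key mechanism is Theorem~\ref{thm:GetSmaller}: for $a$ larger than the base-dependent bound $B_b = (|b|-1)(|b|^2 - |b| + 1)$, one has $0 < S_{2,b}(a) < a$, so iterating $S_{2,b}$ eventually lands both $t_1$ and $t_2$ below $B_b$. The subtlety is that applying $S_{2,b}$ to $t_1$ and to $t_2$ independently need not preserve the ordering or the strict inequality, and the differences can behave erratically, so I would first use $I$ to push both numbers up to a convenient common ``shape'' in base $b$ before applying $S_{2,b}$. Concretely, the trick (as in the proof of Lemma~\ref{Flemma}) is that appending a block of $1$'s in high-order digit positions adds a fixed amount $n$ to a number without disturbing its low-order digits, so $S_{2,b}(t_i + n) = S_{2,b}(t_i) + (\text{number of appended 1's})$; this lets me control $S_{2,b}(t_1 + n) - S_{2,b}(t_2 + n) = S_{2,b}(t_1) - S_{2,b}(t_2)$ exactly. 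Combining a $+1$ adjustment (which changes $S_{2,b}$ by a small, computable amount depending only on the trailing digits) with such high-digit padding, I can arrange, for each of the four even bases, that the images of $t_1$ and $t_2$ either coincide or become a strictly smaller pair; the four cases $b \in \{-4,-6,-8,-10\}$ are then finished by a bounded check on all pairs below $B_b$ (which is $21, 155, 455, 981$ respectively), done by the same padding-and-incrementing tricks or by direct computation.

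The main obstacle, and where the four-base restriction comes from, is controlling how $S_{2,b}(t_1) - S_{2,b}(t_2)$ relates to $t_1 - t_2$: for the result to go through one needs that by judicious use of $I$ one can always reach a configuration where a single $S_{2,b}$ step strictly decreases the larger element (or equalizes the two), and this requires that the digit sums in base $b$ can be manipulated finely enough — which is exactly what fails for, say, $b = -2$ (where the congruence obstruction mod $3$ is permanent) and what is delicate for larger even bases. So the heart of the argument will be a careful case analysis of the base-$b$ digit expansions of $t_1$ and $t_2$ near their low-order digits, showing that in each of the four bases one can always either pad to equalize the sum-of-squares images or reduce to a strictly smaller instance, after which the induction and the finite base-case verification close the proof.
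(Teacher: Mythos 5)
There is a genuine gap: your argument reduces the lemma to a finite problem but never solves it, and that finite problem is the entire content of the lemma. Your induction step asks for an $F$ that either equalizes the pair or produces a strictly smaller pair, but this cannot bottom out on its own: once $t_1$ and $t_2$ are small (in particular once they lie on cycles of $S_{2,b}$), $S_{2,b}$ no longer strictly decreases them and $I$ only increases them, so ``reduce to a smaller instance'' is unavailable exactly where the work has to be done. At that point you appeal to ``a bounded check on all pairs below $B_b$ \dots done by the same padding-and-incrementing tricks or by direct computation,'' but you neither perform this check nor give any reason the required collapsing compositions must exist; as you yourself note, their existence is delicate (it genuinely fails for $b=-2$ because of the mod $3$ invariant), so it cannot be waved through. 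The paper's proof consists precisely of this finite verification, organized efficiently: by Lemma~\ref{Ulemma} one iterates $S_{2,b}$ until both numbers lie in the explicitly known sets $U_{2,b}$ (from Table~\ref{tab:1}, e.g.\ $U_{2,-4}=\{1,6,14\}$), uses $I$ to push one element to $|b|^2$ (which maps to $1$) while steering the other around its cycle, and then exhibits explicit compositions such as $F_2=S_{2,-4}^2I$ with $F_2(1)=F_2(6)=10$, $F_3=S_{2,-6}^6I^7$ with $F_3(1)=F_3(2)$, etc., for every pair that can arise. Without these (or an equivalent exhaustive computation), your proposal is a plausible strategy, not a proof.

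Two smaller points. First, reducing to ``both numbers below $B_b$'' is weaker than what Lemma~\ref{Ulemma} gives you for free, namely landing in the much smaller sets $U_{2,b}$; if you do intend a brute-force finite check, that is the right finite set to check. Second, several of your quoted bounds are miscomputed: $(|b|-1)(|b|^2-|b|+1)$ equals $39$, $155$, $399$, $819$ for $b=-4,-6,-8,-10$, not $21$, $155$, $455$, $981$.
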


\begin{proof}
Let $k\in \Z^+$ such that $S_{2,b}^{k}(t_1), S_{2,b}^{k}(t_2) \in U_{2,b}$, and let $F_1 = S_{2,b}^k$.
If $F_1(t_1) = F_1(t_2)$, we are done, and so we assume otherwise. From Table~\ref{tab:1}, we have
\begin{align*}
U_{2,-4} &= \{1,6,14\},\\
U_{2,-6} &= \{1,2,4,11,16,29,30,33,51\},\\
U_{2,-8} &= \{1,11,30,38,46,53,59,62\},\\
U_{2,-10} &=\{1,19,29,35,46,73,75,76,146,163\}.
\end{align*}

{\bf Case: $\mathbf{b = -4}$.}  
Let $F_2 = S_{2,-4}^2I$ and $F_3 = S_{2,-4}^5I^3$.  Note that
\begin{align*}
F_2(6) &= S_{2,-4}^2(7) = S_{2,-4}^2(133_{(-4)})
= S_{2,-4}(19) = S_{2,-4}(103_{(-4)}) = 10
\text{ and} \\
F_2(1) &= S_{2,-4}^2(2) = S_{2,-4}(4) = S_{2,-4}(130_{(-4)}) = 10.
\end{align*}
Thus, if $\{F_1(t_1),F_1(t_2)\} = \{1,6\}$, then let $F = F_2F_1$, so that $F(t_1) = F(t_2)$.  And if $\{F_1(t_1),F_1(t_2)\} = \{1,14\}$, then, noting that $S_{2,-4}(14) = 6$, let $F = F_2F_1S_{2,-4}$. 

Finally, observe that
\begin{align*}
F_3(6) &= S_{2,-4}^5(9) = S_{2,-4}^5(121_{(-4)})
= S_{2,-4}^4(6) = S_{2,-4}^4(132_{(-4)}) = S_{2,-4}^3(14) = S_{2,-4}^2(6) = 6
\text{ and} \\
F_3(14) &= S_{2,-4}^5(17) = S_{2,-4}^5(101_{(-4)}) = S_{2,-4}^4(2) = S_{2,-4}^3(4)  = S_{2,-4}^2(10) = S_{2,-4}(9) = 6.
\end{align*}
Hence, if $\{F_1(t_1),F_1(t_2)\} = \{6,14\}$, let $F = F_3F_1$.

{\bf Case: $\mathbf{b = -6}$.}  
Let $F_2 = S^\ell_{2,-6} I^{36-F_1(t_1)}$ where $\ell\in \Z^+$ such that  $F_2F_1(t_2) \in 
U_{2,-6}$. Note that $F_2F_1(t_1) = S^\ell_{2,-6} (36) = 1$, regardless of the choice of $\ell$.  If $F_2F_1(t_2) = 1$, we are done.  If not, since $(2,4,16,33,11,51,29,30)$ is a cycle, we can modify our choice of $\ell$ (making it larger, if necessary) to guarantee that $F_2F_1(t_2) = 2$.

Now let $F_3 = S_{2,-6}^6I^{7}$.  Noting that $F_3(1) = F_3(2)$, we set $F = F_3F_2F_1$. 

{\bf Case: $\mathbf{b = -8}$.}  
Let $F_2 = S^\ell_{2,-8} I^{64-F_1(t_1)}$ where $\ell\in \Z^+$ such that  $F_2F_1(t_2) \in 
U_{2,-8}$. Since $F_2F_1(t_1) = S^\ell_{2,-8} (64) = 1$, if   $F_2F_1(t_2) = 1$, we are done.  Otherwise, using Table~\ref{tab:1}, we can choose a possibly larger value of $\ell$ so that $F_2F_1(t_2) \in\{30,59,46\}$. If $F_2F_1(t_2) \in \{30,59\}$, let $F_3 = S_{2,-8}^8I^{2}$.  Noting that $F_3(1) = F_3(30) = F_3(59)$, we set $F = F_3F_2F_1$. If instead $F_2F_1(t_2)= 46$, then let $F_4 = S_{2,-8}^9I^7$. Since $F_4(1) = F_4(46)$, setting $F = F_4F_2F_1$ completes this case.  

{\bf Case: $\mathbf{b = -10}$.}  
Let $F_2 = S^\ell_{2,-10} I^{100-F_1(t_1)}$ where $\ell\in \Z^+$ such that  $F_2F_1(t_2) \in 
U_{2,-10}$. Since $F_2F_1(t_1) = 1$, if $F_2F_1(t_2) =S^\ell_{2,-10}(100)= 1$, we are done.  If not, we can choose $\ell$ so that $F_2F_1(t_2) \in \{19,35\}$. If $F_2F_1(t_2) = 19$, let $F_3 = S_{2,-10}^3I^{22}$ and set $F = F_3F_2F_1$.
If instead $F_2F_1(t_2) =35$, let $F_4 = S_{2,-10}^{16}I$ 
and set $F = F_4F_2F_1$, completing the proof.  
\end{proof}

We now apply the methods in~\cite{GT07} to odd negative~bases, noting that the original proof does not carry over, since, for $b$ negative, $b - 1 \neq |b| - 1$.

\begin{lemma}
\label{oddcongruencelemma}
Fix $b \leq -5$ odd, $v^\prime \in 2\Z^+$, and $r^\prime \in \Z^+$ such
that $b^{2r^\prime} > v^\prime$.
There exists $0 \leq c < |b| - 1$ such that
\begin{equation}
\label{congruence}
2c \equiv 4r^\prime - S_{2,b}\left((|b| - 1)\sum_{i=0}^{r^\prime - 1} b^{2i+1} + v^\prime - 1\right) - 1 \pmod{b - 1}.
\end{equation}
\end{lemma}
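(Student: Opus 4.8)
The plan is to show that a suitable residue class mod $b-1$ contains a representative in the range $[0, |b|-2]$, which is guaranteed because $|b| - 1 = |b-1|$ when $b$ is negative, so the residues $0, 2, 4, \dots, 2(|b|-2)$ run through $2\Z/(b-1)\Z$ in a way that hits every class. More precisely, I would first note that since $b$ is odd, $b - 1$ is even, say $b - 1 = -(|b|-1)$ with $|b| - 1$ even. The key arithmetic fact is that as $c$ ranges over $\{0, 1, \dots, |b|-2\}$, the value $2c$ ranges over $\{0, 2, \dots, 2|b|-4\}$, and modulo $b - 1$ (equivalently modulo $|b|-1$) these cover all residues: indeed $2c \bmod (|b|-1)$ for $c = 0, \dots, |b|-2$ gives $|b|-1$ values, and since $\gcd(2, |b|-1)$ could be $1$ or $2$ one must be slightly careful — but in fact $c \mapsto 2c \bmod (|b|-1)$ is surjective onto $2\Z/(|b|-1)\Z$ as $c$ ranges over a complete residue system mod $|b|-1$, and $\{0,\dots,|b|-2\}$ is exactly such a system.

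The crucial observation I would isolate is a parity statement: the right-hand side of~(\ref{congruence}), call it $N = 4r' - S_{2,b}\bigl((|b|-1)\sum_{i=0}^{r'-1}b^{2i+1} + v' - 1\bigr) - 1$, must be even for the congruence $2c \equiv N \pmod{b-1}$ to be solvable, since $b - 1$ is even and $2c$ is always even. So the heart of the proof is checking that $N$ is even. For this I would apply Lemma~\ref{paritylemma}: since $b$ is odd, $S_{2,b}(m) \equiv m \pmod 2$ for any $m$. Hence $S_{2,b}\bigl((|b|-1)\sum_{i=0}^{r'-1}b^{2i+1} + v' - 1\bigr) \equiv (|b|-1)\sum_{i=0}^{r'-1}b^{2i+1} + v' - 1 \pmod 2$. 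Now $|b|-1$ is even, so that first summand is even; $v' \in 2\Z^+$ is even; thus the argument is $\equiv -1 \equiv 1 \pmod 2$, so $S_{2,b}(\cdots)$ is odd. Therefore $N = 4r' - (\text{odd}) - 1 \equiv 0 - 1 - 1 \equiv 0 \pmod 2$, i.e. $N$ is even.

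Once $N$ is known to be even, write $N = 2M$ for some integer $M$. Then the congruence becomes $2c \equiv 2M \pmod{b-1}$, and since $b - 1 = -(|b|-1)$ with $|b|-1$ even, dividing by $2$ gives $c \equiv M \pmod{(|b|-1)/2}$. This has a solution $c$ with $0 \le c < (|b|-1)/2 \le |b| - 1$, hence in particular $0 \le c < |b|-1$, as required. (Alternatively, and perhaps more cleanly for the write-up: the map $c \mapsto 2c \bmod (b-1)$ restricted to $c \in \{0, 1, \dots, |b|-2\}$ has image exactly the set of even residues mod $|b|-1$, and $N \bmod (|b|-1)$ is even by the parity computation, so it lies in the image.) I expect the main obstacle — really the only substantive point — to be the parity bookkeeping in the previous paragraph: making sure every term's parity is tracked correctly and that the hypothesis $v' \in 2\Z^+$ and the oddness of $b$ are both used exactly where needed. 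The range condition $b^{2r'} > v'$ plays no role here; it is presumably needed later to guarantee that the digit expansion of $(|b|-1)\sum_{i=0}^{r'-1}b^{2i+1} + v' - 1$ behaves as expected, but for the bare existence claim of this lemma only the parity argument is required.
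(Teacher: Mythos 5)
Your core argument is the same as the paper's: use Lemma~\ref{paritylemma} to see that the argument of $S_{2,b}$ in~(\ref{congruence}) is odd (the first summand is even because $|b|-1$ is even, and $v'-1$ is odd), hence its image under $S_{2,b}$ is odd, hence the right-hand side of~(\ref{congruence}) is even; then halve the congruence and take a small nonnegative residue for $c$. That is exactly the paper's proof, your parity bookkeeping is correct, and your remark that the hypothesis $b^{2r'}>v'$ is not actually used in this lemma matches the paper's proof as well.

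There is, however, an arithmetic slip running through your write-up: for $b<0$ you assert $b-1=-(|b|-1)$, i.e.\ $|b-1|=|b|-1$, but in fact $|b-1|=|b|+1$ (for $b=-5$, $b-1=-6$ while $|b|-1=4$). So ``modulo $b-1$'' is not ``equivalently modulo $|b|-1$,'' and after dividing the congruence by $2$ the modulus is $(|b|+1)/2$, not $(|b|-1)/2$. Your claimed range $0\le c<(|b|-1)/2$ is therefore not justified as written; what you actually get is $0\le c<(|b|+1)/2$, and you must then check $(|b|+1)/2\le |b|-1$, which holds precisely because $b\le -5$ --- this is exactly where the paper invokes that hypothesis, noting $0\le c<\left|\frac{b-1}{2}\right|<|b|-1$. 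The fact that your version never needed $b\le -5$ should have been a warning sign. The same slip infects the surjectivity discussion in your first paragraph, but once the modulus is corrected the argument goes through unchanged, so this is a one-line repair rather than a flaw in the approach.
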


\begin{proof}
Since $b$ is odd and $v^\prime$ is even, the input to $S_{2,b}$ 
in~(\ref{congruence}) is odd.  Thus, by Lemma~\ref{paritylemma},  the output is also odd.
Hence, we can choose 
\[c \equiv 2r^\prime - \frac{1}{2}\left( S_{2,b}\left((|b| - 1)\sum_{i=0}^{r^\prime - 1} b^{2i+1} + v^\prime - 1\right) + 1\right) \; \left(\bmod\; {\frac{b - 1}{2}}\right),\]
with $0 \leq c < |\frac{b - 1}{2}| < |b| - 1$, since $b \leq -5$.
\end{proof}

\begin{lemma}
\label{Ftheoremodd}
Fix $b \leq -5$ odd and let $t_1$, $t_2\in \Z^+$ be congruent 
modulo $2$ with $t_2 < t_1$. Then there exists 
a function $F$ of the type described in
Lemma~\ref{Flemma} such that $F(t_1) = F(t_2)$.
\end{lemma}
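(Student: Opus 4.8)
The plan is to mimic the strategy of~\cite[Lemma 5]{GT07}: starting from two positive integers $t_1 > t_2$ of the same parity, I first push both under $S_{2,b}$ until they land in $U_{2,b}$, and then---exploiting that the orbit lands in a controlled finite set and that odd bases preserve parity by Lemma~\ref{paritylemma}---I construct an explicit composition $F$ of $S_{2,b}$'s and $I$'s that collapses the two values to a common point. Write $F_1 = S_{2,b}^k$ with $k$ chosen so that $F_1(t_1), F_1(t_2) \in U_{2,b}$; by Lemma~\ref{paritylemma} these two values are still congruent mod $2$. If they are already equal, take $F = F_1$. Otherwise, set $v^\prime = |F_1(t_1) - F_1(t_2)|$ (an even positive integer) and, choosing $r^\prime$ large enough that $b^{2r^\prime} > v^\prime$ and that $r^\prime$ exceeds the number of base-$b$ digits of the larger value, prepend to the smaller value a block of the form $(|b|-1)\sum_{i=0}^{r^\prime-1} b^{2i+1}$, which adds exactly $v^\prime$ in a non-overlapping set of digit positions. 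The point of this gadget is that after one application of $S_{2,b}$ the prepended block contributes $2r^\prime$ copies of $(|b|-1)^2$, i.e. a term $2r^\prime(|b|-1)^2$, while the two inputs $F_1(t_2) + v^\prime - 1 = F_1(t_1) - 1$ and $F_1(t_2)-1$...

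Here is where Lemma~\ref{oddcongruencelemma} enters: the congruence~(\ref{congruence}) produces $0 \le c < |b|-1$ so that inserting $c$ into an appropriate single digit slot adjusts the $S_{2,b}$-value of the prepended-and-incremented smaller number to exactly match that of $t_1$ (the factor $b-1$ modulus being what lets a single extra digit $c$, contributing $c^2$ but also shifting a carry structure, close the gap). Concretely I expect: apply $I^{v^\prime - 1}$ to $t_2$-image to make it $F_1(t_1)-1$, hit both with $S_{2,b}$, then use the prepended block plus the digit $c$ from Lemma~\ref{oddcongruencelemma} to force equal $S_{2,b}$-outputs, and call the resulting common composition $F$. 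The verification that the two branches genuinely agree reduces to checking that the difference of their $S_{2,b}$-values, computed via the digit expansion, equals $2c+1+S_{2,b}(\dots)-4r^\prime$, which vanishes by the very congruence~(\ref{congruence}) defining $c$ --- modulo $b-1$, and then one arranges $r^\prime$, hence the block length, large enough that the integer identity (not merely the congruence) holds.

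The main obstacle I anticipate is the bookkeeping that makes the mod-$(b-1)$ congruence into an honest equality: one must choose $r^\prime$ (controlling how many $(|b|-1)^2$ terms the prepended block contributes, i.e. $2r^\prime(|b|-1)^2$) so that the free parameter $2r^\prime$ can absorb the correct multiple of $b-1$ while keeping all inserted digits in the legal range $[0,|b|-1)$ and keeping the prepended positions disjoint from the digits of $t_1$. There is also a parity subtlety specific to negative bases: a base-$b$ expansion with an odd number of digits represents a positive integer and one with an even number a negative integer, so the block $\underbrace{0\ldots0}$ padding and the exponent ranges $b^{2i+1}$ must be chosen so that the concatenated numbers still represent the intended positive integers --- this is exactly why the sum runs over odd exponents $2i+1$ and why $\ell^\prime$ was adjusted for parity in Lemma~\ref{Flemma}. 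Once $F$ is built, invoking Lemma~\ref{Flemma} is immediate: since $F(t_1) = F(t_2)$, any set on which $F$ is constant is $(2,b)$-good whenever its image (a singleton) is, and the singleton $\{F(t_1)\}$ is $(2,b)$-good because $F(t_1)$ eventually reaches every $u \in U_{2,b}$; pulling back through $F$ then yields the desired collapsing, which is the engine behind Corollary~\ref{finalcor} and hence parts (3) and (4) of Theorem~\ref{seqthm}.
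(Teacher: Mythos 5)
Your proposal correctly identifies the gadget that the argument turns on (a block of digits $|b|-1$ in alternating positions together with one free digit $c$ supplied by Lemma~\ref{oddcongruencelemma}), but it has a genuine gap at the decisive step. That gadget only controls the difference of the two images \emph{modulo} $b-1$: adding the same $m^\prime$ to both numbers, one finds
$S_{2,b}(t_1+m^\prime)-S_{2,b}(t_2+m^\prime)=2c+1+S_{2,b}\bigl(\sum_{i=0}^{r^\prime-1}(|b|-1)b^{2i+1}+v^\prime-1\bigr)-r^\prime(|b|-1)^2$,
and congruence~(\ref{congruence}) makes this $\equiv 0 \pmod{b-1}$ --- nothing stronger. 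Your plan to ``arrange $r^\prime$ large enough that the integer identity (not merely the congruence) holds'' cannot work: you have no exact control over $S_{2,b}$ of the block-plus-$(v^\prime-1)$ term (the low digits of $v^\prime-1$ interact with the block), and a single digit $c\in[0,|b|-1)$ offers only finitely many adjustments, enough to fix a residue class but not an exact integer value. The missing idea is the other half of the paper's proof: if $t_1\equiv t_2\pmod{b-1}$, write $t_2-t_1=(b-1)v$ with $v\in\Z^+$, pick $r$ with $b^{2r}>b^2v+t_1$, and add $m=b^{2r}+v-t_1$ to both; then $t_1+m=b^{2r}+v$ and $t_2+m=b^{2r}+bv$ have the same non-zero digits, so one application of $S_{2,b}$ makes them \emph{equal}. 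The incongruent case is first reduced to this congruent case by $S_{2,b}I^{m^\prime}$, giving $F=S_{2,b}I^{m}S_{2,b}I^{m^\prime}$. Without this same-non-zero-digits mechanism, your construction never converts agreement mod $b-1$ into equality, which is what the lemma demands.

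There are also smaller inaccuracies worth flagging: the block $\sum_{i=0}^{r^\prime-1}(|b|-1)b^{2i}$ contributes $r^\prime$ (not $2r^\prime$) copies of $(|b|-1)^2$ after one application of $S_{2,b}$; the prepended block does not ``add exactly $v^\prime$'' to anything --- rather, $m^\prime$ is chosen so that $t_2+m^\prime$ is exactly the clean gadget and $t_1+m^\prime$ is the gadget plus $v^\prime$; and any step of $F$ must be applied to \emph{both} numbers, so ``apply $I^{v^\prime-1}$ to the $t_2$-image'' alone is not admissible (adding the same $m^\prime$ to both, as above, is what legitimately realizes your intent). Finally, the closing claim that the singleton $\{F(t_1)\}$ is $(2,b)$-good ``because $F(t_1)$ eventually reaches every $u\in U_{2,b}$'' is false (an orbit reaches only one cycle); singletons are handled separately in Theorem~\ref{mainthm} and are not part of this lemma. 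The preliminary step $F_1=S_{2,b}^k$ landing in $U_{2,b}$ is harmless but unnecessary; the paper's argument works directly with $t_1$ and $t_2$.
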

\begin{proof}
First note that if
$t_1$ and $t_2$ have the same non-zero digits, then
$S_{2,b}(t_1) = S_{2,b}(t_2)$, and so  
$F = S_{2,b}$ suffices.

Next, if  $t_1 \equiv t_2 \pmod{b - 1}$, let $v\in \Z^+$ such 
that $t_2 - t_1 = (b - 1)v$.
Choose $r \in \Z^+$ so that $b^{2r} > b^2v + t_1$, and
let $m = b^{2r} + v - t_1 > 0$.
Then \[I^m(t_1) = t_1 + b^{2r} + v - t_1 = 
b^{2r} + v\]
and
\[I^m(t_2)= t_2 + b^{2r} + v - t_1 = b^{2r} + v+(b-1)v=b^{2r}+bv.\]
Since $b^{2r} > b^2v$, it follows that $I^m(t_1)$ and $I^m(t_2)$
have the same non-zero digits. Thus, it suffices
to let $F = S_{2,b} I^{m}$.

Finally, if $t_1 \not\equiv t_2 \pmod{b - 1}$, let
$v^\prime = t_1 - t_2 \in 2\Z^+$.
Choose $r^\prime \in \Z^+$ such
that $b^{2r^\prime} > b^2t_1$.
By Lemma~\ref{oddcongruencelemma}, since $b^2t_1 > v^\prime$, there exists $0 \leq c < |b| - 1$ such that
congruence~(\ref{congruence}) holds.

Let \[{m^\prime} = 
cb^{2r^\prime } + 
\sum_{i=0}^{r^\prime - 1} (|b| - 1)b^{2i} - t_2 \geq 0.\]
Then 
\[S_{2,b}(t_2 + m^\prime) = 
S_{2,b}\left(cb^{r^\prime } +
\sum_{i=0}^{r^\prime - 1} (|b| - 1)b^{2i}\right) = 
c^2 + r^\prime(|b| - 1)^2.\]
And
\begin{align*}
S_{2,b}\left(t_1 + m^\prime\right) &= 
S_{2,b}\left(cb^{2r^\prime } +
\sum_{i=0}^{r^\prime - 1} (|b| - 1)b^{2i} + v^\prime\right)\\
&= S_{2,b}\left((c + 1)b^{2r^\prime } +
\sum_{i=0}^{r^\prime - 1} (|b| - 1)b^{2i+1} + v^\prime - 1\right)\\
&= (c + 1)^2 + S_{2,b}\left(\sum_{i=0}^{r^\prime - 1} (|b| - 1)b^{2i+1} + v^\prime - 1\right).
\end{align*}

It follows that
\[S_{2,b}(t_1 + m^\prime) -
S_{2,b}(t_2 + m^\prime) = 
2c + 1 + S_{2,b}\left(\sum_{i=0}^{r^\prime - 1} (|b| - 1)b^{2i+1} + v^\prime - 1\right) - r^\prime(b + 1)^2.\]
Using congruence~(\ref{congruence}), 
this yields
\[S_{2,b}(t_1 + m^\prime) -
S_{2,b}(t_2 + m^\prime) \equiv 4r^\prime  - r^\prime(b + 1)^2
\equiv 0 \pmod{b - 1}.\]

Therefore,
$S_{2,b} (I^{m^\prime}(t_1)) \equiv S_{2,b}(I^{m^\prime}(t_2))\ \pmod{b-1}$.
Applying the earlier argument to these two numbers,
yielding an appropriate value of $m \in \Z^+$, we let $F = S_{2,b} I^{m} S_{2,b} I^{m^\prime}$.
\end{proof}

\begin{theorem} 
\label{mainthm}
Fix $b \leq -5$ odd or $b \in \{-4,-6,-8,-10\}$.  Let $d = \gcd(2,b - 1)$.
A finite set $T$ of positive integers is $(2,b)$-good if and only if
all of the elements of $T$ are congruent modulo $d$.
\end{theorem}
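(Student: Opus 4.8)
The plan is to prove both directions of the equivalence, leaning on the machinery already assembled in the excerpt. For the easy direction, suppose $T$ is $(2,b)$-good. When $b$ is odd, $d = 2$, and Lemma~\ref{paritylemma} tells us $S_{2,b}^k$ preserves parity; since being $(2,b)$-good forces the existence of $n,k$ with $S_{2,b}^k(t+n)$ equal to a fixed $u$ for \emph{every} $t \in T$, all the $t+n$ must share the parity of $u$, hence all elements of $T$ are congruent mod $2$. When $b \in \{-4,-6,-8,-10\}$, $d = \gcd(2,b-1) = 1$, so the congruence condition is vacuous and there is nothing to prove. So the real content is the converse.

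For the converse, fix a finite set $T = \{t_1 > t_2 > \cdots > t_r\} \subseteq \Z^+$ all congruent mod $d$, and let $u \in U_{2,b}$ be arbitrary. The strategy is: first collapse $T$ to a single point using a function $F$ of the type in Lemma~\ref{Flemma}, then route that single point to $u$. For the collapsing step I would proceed by induction on $|T|$. Given two of the elements, say the two largest $t_1 > t_2$, both congruent mod $d$, Lemma~\ref{Ftheoremodd} (for $b \leq -5$ odd) or Lemma~\ref{Flemmas} (for $b \in \{-4,-6,-8,-10\}$) produces a function $F_0$, a composition of $S_{2,b}$'s and $I$'s, with $F_0(t_1) = F_0(t_2)$. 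Applying $F_0$ to all of $T$ replaces $T$ by $F_0(T)$, which has strictly fewer elements (since $t_1, t_2$ now coincide) and — crucially — whose elements are still pairwise congruent mod $d$: for $d = 1$ this is automatic, and for $d = 2$ it follows because $S_{2,b}$ and $I$ each shift parity in a way independent of the input ($I$ flips parity, $S_{2,b}$ preserves it by Lemma~\ref{paritylemma}), so $F_0$ acts uniformly on parities. Iterating, after finitely many steps we reach a function $G$ (a composition of the $F_0$'s, hence still of Lemma~\ref{Flemma} type) with $G(T) = \{t^*\}$ a singleton.

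It now remains to send the single positive integer $t^*$ to the chosen $u \in U_{2,b}$, again via a Lemma~\ref{Flemma}-type function; equivalently, by Lemma~\ref{Flemma} applied in reverse, it suffices to show every singleton $\{t^*\}$ is $(2,b)$-good, i.e.\ that $\{1\}$ is $(2,b)$-good and then climb back. Concretely: the full set $T$ is $(2,b)$-good provided $G(T) = \{t^*\}$ is $(2,b)$-good (Lemma~\ref{Flemma}), so it is enough to prove the theorem for singletons. A singleton $\{t^*\}$ is $(2,b)$-good iff for each $u \in U_{2,b}$ there are $n,k$ with $S_{2,b}^k(t^*+n) = u$. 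Choose $k_0$ with $S_{2,b}^{k_0}(t^*) \in U_{2,b}$ (Lemma~\ref{Ulemma}); then it suffices to connect any two elements of $U_{2,b}$ to each other within the digraph of $S_{2,b}$ augmented by the shift $I$ — but that connectivity is exactly what the case analyses in Lemmas~\ref{Flemmas} and~\ref{Ftheoremodd} already establish when combined with the observation that two integers with the same nonzero digits have the same $S_{2,b}$ value (so from any $u'$ we can reach $1$ by first using $I$'s to pad with a block of $1$'s and then hitting it with $S_{2,b}$ to land on a small value, then finishing by the finite reachability verified in those lemmas). I would package this as: apply Lemma~\ref{Ftheoremodd}/Lemma~\ref{Flemmas} once more to the pair $\{t^*, $ (a witness in $U_{2,b}$)$\}$ if needed, then cite the explicit computations already in the proof of Lemma~\ref{Flemmas} (and the analogous odd-base argument) to land on $u$.

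The main obstacle is the last step — reaching an \emph{arbitrary} $u \in U_{2,b}$ rather than just $1$ — for the odd bases $b \leq -5$, where $U_{2,b}$ is infinite (it contains, for each such $b$, all odd positive integers, by Lemma~\ref{-2-3}'s analogue, or more precisely $U_{2,b}$ consists of the points lying on cycles of $S_{2,b}$, which is a finite set by Corollary~\ref{bound}, so in fact $U_{2,b}$ \emph{is} finite). So the obstacle is really bookkeeping: verifying that for each of the finitely many $u$ in each relevant $U_{2,b}$ one can steer $t^*$ to $u$; for $b$ odd this is handled uniformly by noting $S_{2,b}$ eventually lands in $U_{2,b}$ and every element of $U_{2,b}$ of the correct parity is reachable from any other via padding-then-$S_{2,b}$ moves, while for $b \in \{-4,-6,-8,-10\}$ it is the finite check already carried out. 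I would state the singleton reduction as a short lemma, then invoke the prior lemmas, keeping the case-by-case verification to the minimum already present in the paper.
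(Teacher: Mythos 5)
Your forward direction and your collapsing step are exactly the paper's argument: parity preservation (Lemma~\ref{paritylemma}) for odd $b$, vacuity for even $b$, and then an induction on $|T|$ in which Lemma~\ref{Ftheoremodd} or Lemma~\ref{Flemmas} collides two elements and Lemma~\ref{Flemma} transfers goodness back, with the observation that $I$ and $S_{2,b}$ act uniformly on residues mod $d$. The gap is in your base case, the singleton $\{t^*\}$. Goodness of a singleton demands, for \emph{every} $u \in U_{2,b}$, integers $n,k$ with $S_{2,b}^k(t^*+n) = u$: the additive shift happens once, and after it only iterates of $S_{2,b}$ are allowed. Your proposed route does not produce this. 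Colliding $t^*$ with a witness $u$ via Lemma~\ref{Flemmas}/\ref{Ftheoremodd} only gives $F(t^*) = F(u)$ for some composition $F$, which says nothing about hitting $u$ itself; and Lemma~\ref{Flemma} only reduces goodness of $\{t^*\}$ to goodness of $\{F(t^*)\}$, so no amount of ``steering'' removes the need to exhibit, for some concrete singleton, a direct representation $S_{2,b}^k(s+n) = u$ for each $u$. Your padding-with-ones trick can reach $u = 1$, but from $1$ the only $S_{2,b}$-orbit is $1$, so it cannot reach the other elements of $U_{2,b}$ (e.g.\ $6$ and $14$ for $b = -4$, or the cycle elements for odd $b$); the claim that ``every element of $U_{2,b}$ of the correct parity is reachable from any other via padding-then-$S$ moves'' is exactly the unproved content, and the case computations in Lemma~\ref{Flemmas} establish collisions, not this kind of reachability.

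The missing idea is the paper's one-line construction: since $u \in U_{2,b}$ lies on a cycle (Definition~\ref{Udef}), it has an $S_{2,b}$-preimage $x$ with $S_{2,b}(x) = u$; choosing $r \in 2\Z^+$ with $b^r x \geq t^*$ and setting $n = b^r x - t^*$, the number $t^* + n = b^r x$ has the same nonzero digits as $x$ (the exponent is even, so $b^r > 0$ and the digits merely shift), whence $S_{2,b}(t^*+n) = u$ with $k = 1$. With that base case inserted, your induction goes through and coincides with the paper's proof; without it, the converse is incomplete. (Your aside about $U_{2,b}$ being infinite is also off --- it is finite by Corollary~\ref{bound} and Theorem~\ref{thm:GetSmaller}, as you eventually note --- but that is cosmetic, not the real problem.)
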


\begin{proof}
Fix a finite set of positive integers, $T$. 
First, assume that $T$ is $(2,b)$-good.  If $b$ is even, then $d = 1$, and the congruence result is trivial.  If $b$ is odd, fix $u\in U_{2,b}$. Then there exists $n$, $k \in {\Z}^+$ such that for each $t \in T$, $S_{2,b}^k(t+n) = u$.   It follows from Lemma~\ref{paritylemma}
that, for each $t \in T$, $t + n \equiv u \pmod 2$.  Hence, the elements of $T$ are congruent modulo $d = 2$.  

For the converse, assume that the elements of $T$
are congruent modulo $d$.
If $T$ is empty, then vacuously it is $(2,b)$-good. If $T = \{t\}$, then given $u \in U_{2,b}$, by definition, there exist $x\in \Z^+$ such that
$S_{2,b}(x) = u$.  
Fix some $r \in 2\Z^+$ such that $t \leq b^rx$.
Then, letting $n = b^rx-t$ and $k = 1$, since $S_{2,b}^k(t + n) = S_{2,b}(t+(b^rx-t)) = S_{2,b}(x) = u$, $T$ is $(2,b)$-good.

Now assume that $|T| = N > 1$ and assume, by induction, that any set of fewer than $N$ elements all
of which are congruent modulo $d$ is $(2,b)$-good. 
Let $t_1 > t_2 \in T$. By Lemmas~\ref{Flemmas} and~\ref{Ftheoremodd}, there exists a function $F$ as in Lemma~\ref{Flemma} such that
$F(t_1) = F(t_2)$.  This implies that $F(T)$ has fewer than $N$ elements. 
Further, since the elements of $T$ are congruent modulo $d$, the same
holds for $I(T)$ and, by Lemma~\ref{paritylemma},
for $S_{2,b}(T)$,
implying that the same holds for $F(T)$.
Thus, by the induction hypothesis, $F(T)$ is $(2,b)$-good and so, by Lemma~\ref{Flemma}, $T$ is $(2,b)$-good.
\end{proof}

\begin{cor}
\label{finalcor}
For $b \leq -3$ odd or $b \in \{-4,-6,-8,-10\}$ and $d = \gcd(2,b - 1)$, there exist arbitrarily long finite sequences of $d$-consecutive $b$-happy numbers.
\end{cor}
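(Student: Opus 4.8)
The plan is to deduce this almost immediately from Theorem~\ref{mainthm}, after handling the base $b = -3$ separately, since Theorem~\ref{mainthm} covers only $b \leq -5$ odd (together with the four listed even bases). For $b = -3$ we have $d = \gcd(2,b-1) = 2$, and Lemma~\ref{-2-3} identifies the $-3$-happy numbers as exactly the odd positive integers; hence for any target length $L$ the sequence $1, 3, 5, \ldots, 2L - 1$ is a $2$-consecutive sequence of $L$ $-3$-happy numbers, and these can be taken arbitrarily long.

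For the remaining bases, fix $L \in \Z^+$ and set $d = \gcd(2, b - 1)$. Consider the finite set $T = \{\,1,\ 1 + d,\ 1 + 2d,\ \ldots,\ 1 + (L-1)d\,\} \subseteq \Z^+$. Every element of $T$ is congruent to $1$ modulo $d$, so by Theorem~\ref{mainthm} the set $T$ is $(2,b)$-good. Since $S_{2,b}(1) = 1$, we have $1 \in U_{2,b}$, so applying the definition of $(2,b)$-good with $u = 1$ yields $n, k \in \Z^+$ such that $S_{2,b}^k(t + n) = 1$ for every $t \in T$. In other words, each of the integers $n + 1,\ n + 1 + d,\ \ldots,\ n + 1 + (L-1)d$ is a $b$-happy number, and these form a $d$-consecutive sequence of length $L$. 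As $L$ was arbitrary, this produces arbitrarily long finite $d$-consecutive sequences of $b$-happy numbers.

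There is essentially no obstacle remaining: all the substantive work has been done in Theorem~\ref{mainthm} (via Lemmas~\ref{Flemmas} and~\ref{Ftheoremodd}), and the corollary is just the observation that a $(2,b)$-good arithmetic progression, translated forward by the shift $n$ guaranteed in the definition of goodness, becomes an arithmetic progression of happy numbers. The only points requiring care are (i) remembering that $b = -3$ falls outside the scope of Theorem~\ref{mainthm} and must be treated using Lemma~\ref{-2-3}, and (ii) checking that $1 \in U_{2,b}$ so that the definition of $(2,b)$-good can be invoked with the target value $u = 1$.
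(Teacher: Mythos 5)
Your proposal is correct and follows essentially the same route as the paper: handle $b=-3$ via Lemma~\ref{-2-3}, then for the remaining bases apply Theorem~\ref{mainthm} to the arithmetic progression $T=\{1+dt \;|\; 0\leq t\leq N-1\}$ and invoke the definition of $(2,b)$-good with $u=1$ to translate $T$ into a $d$-consecutive sequence of $b$-happy numbers. Your explicit check that $1\in U_{2,b}$ is a small point the paper leaves implicit, but it is not a difference in approach.
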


\begin{proof}
By Lemma~\ref{-2-3}, every odd positive integer is $-3$-happy.  So the corollary holds for $b = -3$.

For $b < -3$, given $N\in \Z^+$, let $T = \{1 + dt \;|\; 0\leq t \leq N-1\}$.
By Theorem~\ref{mainthm}, $T$ is $(2,b)$-good.  
By Definition~\ref{good}, there exist $n$, $k \in {\Z}^+$ such that for each $t \in T$, $S_{2,b}^k(t+n) = 1$.
Thus, $\{1 + n + dt\; |\; 0\leq t \leq N-1\}$ is a sequence of $N$ $d$-consecutive $b$-happy numbers, as desired.
\end{proof}

\vskip 20pt


\begin{thebibliography}{00}
\setlength{\topmargin}{.25in}

\bibitem{siksek}
Esam El-Sedy and Samir Siksek, \emph{On Happy Numbers}, Rocky Mountain Journal of Mathematics \textbf{30} (2000), no.~2, 565--570.

\bibitem{GT07} H.G. Grundman and E.A. Teeple, \emph{Sequences of consecutive happy numbers}, Rocky Mountain Journal of Mathematics \textbf{37} (2007), no.~6, 1905--1916. 

\bibitem{GT01} H.G. Grundman and E.A. Teeple, \emph{Generalized happy numbers}, Fibonacci Quarterly {\bf 39} (2001), 462--466.

\bibitem{VG} V. Gr\"{un}wald, \emph{Giornale di Matematiche di Battaglini} (1885), 203-221, 367.

\end{thebibliography}
\end{document}